\documentclass{LMCS}

\def\dOi{10(2:9)2014}
\lmcsheading%
{\dOi}
{1--10}
{}
{}
{Jan.~\phantom04, 2013}
{Jun.~16, 2014}
{}

\ACMCCS{[{\bf Theory of computation}]: Constructive Mathematics}


\usepackage{enumerate}
\usepackage{hyperref}
\usepackage{amssymb}
\def\dbigcup{\mathop{\displaystyle \bigcup }}
\def\func#1{\mathop{\rm #1}}

\theoremstyle{plain}

\begin{document}

\title{Locating $\mathfrak{A}x$, where $\mathfrak{A}$ is a subspace of $\mathcal{B}(H)$}

\author{Douglas S. Bridges}	
\address{University of Canterbury, Christchurch, New Zealand}	
\email{douglas.bridges@canterbury.ac.nz}  



\keywords{constructive, Hilbert space, space of operators, located}
\amsclass{03F60,46S30,47S30}


\begin{abstract}
  Given a linear space of operators on a Hilbert space, any vector in
  the latter determines a subspace of its images under all operators.
  We discuss, within a Bishop-style constructive framework, conditions
  under which the projection of the original Hilbert space onto the
  closure of the image space exists.  We derive a general result that
  leads directly to both the open mapping theorem and our main theorem
  on the existence of the projection.
\end{abstract}

\maketitle


\section{Introduction}

Let $H$ be a real or complex Hilbert space, $\mathcal{B}(H)$ the space of bounded operators on $H$, and $\mathfrak{A}$ a linear subspace of $\mathcal{B}(H)$. For each $x\in H$ write
\[\mathfrak{A}x\equiv \left\{ Ax:A\in \mathfrak{A}\right\} , \]
and, \emph{if it exists}, denote the projection of $H$ onto the closure $\overline{\mathfrak{A}x}$ of $\mathfrak{A}x$ by $\left[ \mathfrak{A}x\right]$. Projections of this type play a very big part in the classical theory of operator algebras, in which context $\mathfrak{A}$ is normally a subalgebra of $\mathcal{B}(H)$; see, for example, \cite{Dixmier,KR,Sakai,Topping}. However, in the constructive\footnote{Our \emph{constructive setting} is that of Bishop \cite{Bishop,BB,BV}, in which the mathematics is developed with intuitionistic, not classical, logic, in a suitable set- or type-theoretic framework \cite{Aczel,ML} and with dependent choice permitted.} setting---the one of this paper---we cannot even guarantee that $\left[ \mathfrak{A}x\right] $ exists. Our aim is to give sufficient conditions on $\mathfrak{A}$ and $x$ under which $\left[\mathfrak{A}x\right] $ exists, or, equivalently, the set $\mathfrak{A}x$ is located, in the sense that
\[\rho \left( v,\mathfrak{A}x\right) \equiv \inf \left\{ \left\Vert v-Ax\right\Vert :A\in \mathfrak{A}\right\} \]
exists for each $v\in H$.

We require some background on operator topologies. Specifically, in addition to the standard uniform topology on $\mathcal{B}(H)$, we need
\begin{itemize}
\item[$\vartriangleright $] the \emph{\textbf{strong operator topology:}} the weakest topology on $\mathcal{B}(H)$ with respect to which the mapping $T\rightsquigarrow Tx$ is continuous for all $x\in H$;
\item[$\vartriangleright $] the \textbf{\emph{weak operator topology:}} the weakest topology on $\mathcal{B}(H)$ with respect to which the mapping $T\rightsquigarrow \left\langle Tx,y\right\rangle $ is continuous for all $x,y\in H$.
\end{itemize}
These topologies are induced, respectively, by the seminorms of the form $T\rightsquigarrow \left\Vert Tx\right\Vert $ with $x\in H$, and $T\rightsquigarrow \left\vert \left\langle Tx,y\right\rangle \right\vert$ with $x,y\in H$. The unit ball\footnote{Note that it is not constructively provable that every element $T$ of $\mathcal{B}(H)$ is normed, in the sense that the usual operator norm of $T$ exists. Nevertheless, when we write `$\left\Vert T\right\Vert \leqslant 1$', we are using a shorthand for `$\left\Vert Tx\right\Vert \leqslant \left\Vert x\right\Vert$ for each $x\in H$'. Likewise, `$\left\Vert T\right\Vert <1$' means that there exists $c<1$ such that $\left\Vert Tx\right\Vert \leqslant c\left\Vert x\right\Vert $ for each $x\in H$; and `$\left\Vert T\right\Vert>1$' means that there exists $x\in H$ such that $\left\Vert Tx\right\Vert
>\left\Vert x\right\Vert $.}
\[\mathcal{B}_{1}(H)\equiv \left\{ T\in \mathcal{B}(H):\left\Vert T\right\Vert\leqslant 1\right\}\]
of $\mathcal{B}(H)$ is classically weak-operator compact, but constructively the most we can say is that it is weak-operator totally bounded (see \cite{BVwo}). The evidence so far suggests that in order to make progress when dealing constructively with a subspace or subalgebra $\mathfrak{A}$ of $ \mathcal{B}(H)$, it makes sense to add the weak-operator total boundedness of
\[\mathfrak{A}_{1}\equiv \mathfrak{A}\cap \mathcal{B}_{1}(H)\]
to whatever other hypothesis we are making; in particular, it is known that $\mathfrak{A}_{1}$ is located in the strong operator topology---and hence $\mathfrak{A}_{1}x$ is located for each $x\in H$---if and only if it is weak-operator totally bounded \cite{BHV,Spitters}.

Recall that the \emph{\textbf{metric complement}} of a subset $S$ of a metric space $X$ is the set $-S$ of those elements of $X$ that are bounded away from $X$. When $Y$ is a subspace of $X$, $y\in Y$, and $S\subset Y$, we define
\[\rho _{Y}\left( y,-S\right) \equiv \inf \left\{ \rho \left( y,z\right) :z\in Y\cap -S\right\}\]
if that infimum exists.

We now state our main result.

\begin{thm}\label{2202a}
Let $\mathfrak{A}$ be a uniformly closed subspace of $\mathcal{B}(H)$ such that $\mathfrak{A}_{1}$ is weak-operator totally bounded, and let $x$ be a point of $H$ such that $\mathfrak{A}x$ is closed and\thinspace $\rho _{\mathfrak{A}x}\left( 0,-\mathfrak{A}_{1}x\right) $ exists. Then the projection $\left[ \mathfrak{A}x\right] $ exists.
\end{thm}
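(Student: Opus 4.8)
The plan is to reduce the statement to the locatedness of $\mathfrak{A}x$ and then to exploit the homogeneity of the situation. By the equivalence recorded in the introduction, the projection $\left[\mathfrak{A}x\right]$ exists precisely when the (closed) subspace $\mathfrak{A}x$ is located, so it suffices to produce $\rho\left(v,\mathfrak{A}x\right)$ for each $v\in H$. From the weak-operator total boundedness of $\mathfrak{A}_1$ and the cited characterisation, $\mathfrak{A}_1x$ is located; and since every $A\in\mathfrak{A}\subset\mathcal{B}(H)$ is bounded, each $Ax$ lies in $n\mathfrak{A}_1x$ for some positive integer $n$. Thus
\[
  \mathfrak{A}x=\bigcup_{n\geqslant 1}n\mathfrak{A}_1x ,
\]
an increasing union of located sets (each $n\mathfrak{A}_1x$ being located by homogeneity of the metric). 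The whole difficulty is that a countable union of located sets need not be located, so the remaining hypotheses must be used to repair this.

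First I would fix $v\in H$ and note that, since $0\in\mathfrak{A}x$, only vectors $z\in\mathfrak{A}x$ with $\|z\|\leqslant 2\|v\|$ are relevant to $\rho\left(v,\mathfrak{A}x\right)$. Writing $a_n\equiv\rho\left(v,n\mathfrak{A}_1x\right)=n\,\rho\!\left(\frac{1}{n} v,\mathfrak{A}_1x\right)$, one obtains a decreasing sequence of nonnegative reals with $\rho\left(v,\mathfrak{A}x\right)=\inf_n a_n$, so locatedness amounts to the convergence of $(a_n)$. Here the hypothesis on $d\equiv\rho_{\mathfrak{A}x}\left(0,-\mathfrak{A}_1x\right)$ enters. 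Its geometric content is that any $z\in\mathfrak{A}x$ with $\|z\|<d$ cannot be bounded away from $\mathfrak{A}_1x$ — otherwise it would witness $d\leqslant\|z\|$ — so $\rho\left(z,\mathfrak{A}_1x\right)=0$ and hence $z\in\overline{\mathfrak{A}_1x}$. Provided $d>0$, rescaling by linearity then shows that every relevant $z$ lies in $\overline{n_0\mathfrak{A}_1x}$ once the integer $n_0$ exceeds $4\|v\|/d$, whence
\[
  \rho\left(v,\mathfrak{A}x\right)=\rho\left(v,n_0\mathfrak{A}_1x\right),
\]
and the right-hand side exists because $n_0\mathfrak{A}_1x$ is located. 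This would give locatedness, and with it the projection.

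The main obstacle is that the hypothesis supplies only the existence of $d$, not its positivity, whereas the fixed-scale argument above needs $d>0$. I expect this gap to be closed by the two hypotheses not yet fully used: the uniform closedness of $\mathfrak{A}$ (which makes it a Banach space) and the closedness of $\mathfrak{A}x$. These are the constructive surrogates for the hypotheses of the open mapping theorem, and they should forbid the pathological ``non-open'' case $d=0$, in which $\mathfrak{A}_1x$ fails to fill a neighbourhood of $0$ in $\mathfrak{A}x$. Concretely, I would either (i) derive $d>0$ from completeness of $\mathfrak{A}$ and closedness of $\mathfrak{A}x$, running a constructive Baire-type iterated-approximation argument to show that $\overline{\mathfrak{A}_1x}$ contains a ball of $\mathfrak{A}x$ about $0$; or (ii) bypass positivity by using only the computability of $d$ to prove directly that $(a_n)$ is Cauchy, bounding the scale $n$ needed to capture a given bounded region of $\mathfrak{A}x$ to within $\varepsilon$. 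Since the same mechanism drives the open mapping theorem, I would isolate it as a single general lemma about located images of unit balls and read off both results from it; converting the mere existence of the metric-complement distance into this uniform control is, I believe, the crux of the argument.
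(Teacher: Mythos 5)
Your first two steps are sound and coincide with the paper's: the reduction of the projection's existence to locatedness of $\mathfrak{A}x$, and the fixed-scale argument (only $z$ with $\left\Vert z\right\Vert \leqslant 2\left\Vert v\right\Vert $ matter, so a ball inclusion $B_{\mathfrak{A}x}(0,r)\subset \mathfrak{A}_{1}x$ --- or even $\subset \overline{\mathfrak{A}_{1}x}$, since distance to a set equals distance to its closure --- forces $\rho \left( v,\mathfrak{A}x\right) =\rho \left( v,\mathfrak{A}_{N}x\right) $ for any $N>2\left\Vert v\right\Vert /r$) is exactly Proposition \ref{2301a2}. But everything after that is a plan, not a proof: the ball inclusion itself, which is the entire content of Lemma \ref{2802a2} and Theorem \ref{2802a3} and hence the heart of Theorem \ref{2202a}, is left as ``route (i) or route (ii)''. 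So what you have is a correct reduction of the theorem to a generalised open mapping theorem, with that theorem unproved.

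Moreover, route (i) cannot work with the allocation of hypotheses you propose. Constructively, the ``union'' form of Baire's theorem --- a complete space covered by countably many closed located sets has one with inhabited interior --- is not provable from completeness alone; the version the paper invokes (Theorem 6.6.1 of \cite{BV}; see \cite{BHV1}) requires locatedness data about the metric complements of the covering sets, and the hypothesis that $\rho _{\mathfrak{A}x}\left( 0,-\mathfrak{A}_{1}x\right) $ exists is precisely that data. In your plan this hypothesis is reserved for the final rescaling step ($d>0$ implies locatedness), while the Baire step is asked to run on completeness of $\mathfrak{A}$ and closedness of $\mathfrak{A}x$ alone. If that were possible, the hypothesis on $d$ would be redundant altogether: a ball $B_{\mathfrak{A}x}(0,r)\subset \overline{\mathfrak{A}_{1}x}$ already yields locatedness by your own fixed-scale argument with $r$ in place of $d$. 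That is exactly what the paper's closing Brouwerian example forbids: without the existence of $\rho _{\mathfrak{A}x}\left( 0,-\mathfrak{A}_{1}x\right) $, the conclusion implies \textbf{LPO}. So the existence of $d$ must be consumed \emph{inside} the Baire-type argument, as in the paper, not after it. Route (ii) is too vague to assess, and I see no mechanism by which the bare existence of $d$ controls the rate of decrease of your sequence $\left( a_{n}\right) $ without that same machinery. Finally, you never identify how uniform closedness of $\mathfrak{A}$ enters: in the paper it gives superconvexity of $\mathfrak{A}_{1}x$ (a convex series $\sum_{n}\lambda _{n}A_{n}$ with $A_{n}\in \mathfrak{A}_{1}$ converges uniformly to an element of $\mathfrak{A}_{1}$), which is what Lemma \ref{2802a2} uses to land its limit $\xi $ in $2C$ rather than merely in $\overline{2C}$; a completed version of your argument must either do the same or justify explicitly that working with closures suffices throughout.
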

Before proving this theorem, we discuss, in Section \ref{sec:2}, some general results about the locatedness of sets like $\mathfrak{A}x$, and we derive, in Section 3, a generalisation of the open mapping theorem that leads to the proof of Theorem \ref{2202a}. Finally, we show, by means of a Brouwerian example, that the existence of $\rho _{\mathfrak{A}x}\left( 0,-\mathfrak{A}_{1}x\right) $ cannot be dropped from the hypotheses of our main theorem.

\section{Some general locatedness results for $\mathfrak{A}x$}\label{sec:2}

We now prove an elementary, but helpful, result on locatedness in a Hilbert space.

\begin{prop}\label{2202b}
Let $\left( S_{n}\right) _{n\geqslant 1}$ be a sequence of located, convex subsets of a Hilbert space $H$ such that $S_{1}\subset S_{2}\subset \cdots $ , let $S_{\infty }=\dbigcup\limits_{n\geqslant 1}S_{n}$, and let $x\in H$. For each $n$, let $x_{n}\in S_{n}$ satisfy $\left\Vert x-x_{n}\right\Vert <\rho \left( x,S_{n}\right) +2^{-n}$. Then
\begin{equation}
\rho \left( x,S_{\infty }\right) =\inf_{n\geqslant 1}\rho(x,S_{n})=\lim_{n\rightarrow \infty }\rho \left( x,S_{n}\right),\label{6}
\end{equation}
in the sense that if any of these three numbers exists, then all three do and they are equal. Moreover, $\rho \left( x,S_{\infty }\right) $ exists if and only if $\left( x_{n}\right) _{n\geqslant 1}$ converges to a limit $x_{\infty }\in H$; in that case, $\rho \left( x,S_{\infty }\right)=\left\Vert x-x_{\infty }\right\Vert $, and $\left\Vert x-y\right\Vert>\left\Vert x-x_{\infty }\right\Vert $ for all $y\in S_{\infty }$ with $y\neq x_{\infty }$.
\end{prop}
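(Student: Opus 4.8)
The plan is to exploit two structural features of $H$: the parallelogram law and completeness. Throughout, write $a_{n}\equiv \rho \left( x,S_{n}\right) $. Since $S_{n}\subset S_{n+1}$, the sequence $\left( a_{n}\right) $ is (weakly) decreasing, and because $x_{n}\in S_{n}$ the defining property of $x_{n}$ gives $a_{n}\leqslant \left\Vert x-x_{n}\right\Vert <a_{n}+2^{-n}$; in particular $\left\Vert x-x_{n}\right\Vert $ and $a_{n}$ have the same limiting behaviour. The one tool I would isolate at the outset is the following consequence of convexity: if $u,v$ lie in a convex $S_{m}$, then $\tfrac{1}{2}\left( u+v\right) \in S_{m}$, so the parallelogram law yields
\[\left\Vert u-v\right\Vert ^{2}=2\left\Vert x-u\right\Vert ^{2}+2\left\Vert x-v\right\Vert ^{2}-4\left\Vert x-\tfrac{1}{2}\left( u+v\right) \right\Vert ^{2}\leqslant 2\left\Vert x-u\right\Vert ^{2}+2\left\Vert x-v\right\Vert ^{2}-4a_{m}^{2}.\]
This is the estimate that does all the real work.

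First I would dispose of the three-way equality \eqref{6}. For a decreasing sequence of nonnegative reals, the infimum exists if and only if the limit exists, and then they coincide; this is immediate from the definition of infimum and monotonicity. It remains to identify $\rho \left( x,S_{\infty }\right) $ with $\inf_{n}a_{n}$ in the sense of existence. Since every $y\in S_{\infty }$ lies in some $S_{n}$, the set $\left\{ \left\Vert x-y\right\Vert :y\in S_{\infty }\right\} $ is the union of the sets $\left\{ \left\Vert x-y\right\Vert :y\in S_{n}\right\} $; a lower bound for $\left( a_{n}\right) $ is therefore a lower bound for the former set and conversely, and an approximate minimiser for one produces one for the other. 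Transcribing this with $\varepsilon $'s shows that if either $\inf_{n}a_{n}$ or $\rho \left( x,S_{\infty }\right) $ exists, then so does the other, with equality.

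Next comes the equivalence with convergence of $\left( x_{n}\right) $. If $\rho \left( x,S_{\infty }\right) $ exists, then by the first part $\beta \equiv \lim_{n}a_{n}$ exists. Applying the displayed estimate with $u=x_{n}$, $v=x_{m}$ and $m\geqslant n$ (so that both lie in $S_{m}$), and using $\left\Vert x-x_{k}\right\Vert <a_{k}+2^{-k}$ together with $a_{m}\geqslant \beta $, I bound $\left\Vert x_{n}-x_{m}\right\Vert ^{2}$ by a quantity tending to $2\beta ^{2}+2\beta ^{2}-4\beta ^{2}=0$; writing $a_{k}=\beta +\delta _{k}$ with $\delta _{k}\downarrow 0$ makes the modulus explicit. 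Hence $\left( x_{n}\right) $ is Cauchy, and by completeness of $H$ it converges to some $x_{\infty }$, with $\left\Vert x-x_{\infty }\right\Vert =\lim_{n}\left\Vert x-x_{n}\right\Vert =\beta $. Conversely, if $x_{n}\rightarrow x_{\infty }$ then $\left\Vert x-x_{n}\right\Vert \rightarrow \left\Vert x-x_{\infty }\right\Vert $; since $a_{n}\leqslant \left\Vert x-x_{n}\right\Vert <a_{n}+2^{-n}$, the sequence $\left( a_{n}\right) $ also converges to $\left\Vert x-x_{\infty }\right\Vert $, and the first part returns the existence of $\rho \left( x,S_{\infty }\right) $ and its value $\left\Vert x-x_{\infty }\right\Vert $.

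Finally, for the uniqueness clause I would fix $y\in S_{\infty }$ with $y\neq x_{\infty }$ and apply the same estimate with $u=y$, $v=x_{m}$ for $m$ large enough that $y\in S_{m}$. Letting $m\rightarrow \infty $ and using $x_{m}\rightarrow x_{\infty }$ gives $\left\Vert y-x_{\infty }\right\Vert ^{2}\leqslant 2\left\Vert x-y\right\Vert ^{2}-2\beta ^{2}$, that is, $\left\Vert x-y\right\Vert ^{2}\geqslant \beta ^{2}+\tfrac{1}{2}\left\Vert y-x_{\infty }\right\Vert ^{2}$; since $\left\Vert y-x_{\infty }\right\Vert >0$ this forces $\left\Vert x-y\right\Vert >\beta =\left\Vert x-x_{\infty }\right\Vert $. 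The main obstacle, and the only place where constructivity really bites, is this Cauchy estimate: one cannot simply invoke monotone convergence to produce $\beta $, so it is essential to feed in the hypothesis that $\rho \left( x,S_{\infty }\right) $ exists (via the first part) to supply $\beta $ as an actual real number before the parallelogram bound can be turned into an explicit rate of convergence.
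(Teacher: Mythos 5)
Your proof is correct, and its engine is the same as the paper's: the parallelogram-law estimate $\left\Vert u-v\right\Vert ^{2}\leqslant 2\left\Vert x-u\right\Vert ^{2}+2\left\Vert x-v\right\Vert ^{2}-4\rho \left( x,S_{m}\right) ^{2}$ for $u,v\in S_{m}$, applied to the approximate minimisers, yields the Cauchy property, and the identification of $\rho \left( x,S_{\infty }\right) $ with $\inf_{n}\rho \left( x,S_{n}\right) $ handles (\ref{6}). The genuine difference is in the converse direction (convergence of $\left( x_{n}\right) $ implies existence of $\rho \left( x,S_{\infty }\right) $). The paper argues by a dichotomy---either $\left\Vert x-x_{\infty }\right\Vert >\alpha +2\varepsilon $ or $\left\Vert x-x_{\infty }\right\Vert <\beta $---and then invokes the constructive least-upper-bound principle (Theorem 2.1.18 of \cite{BV}) to produce $\inf_{n}\rho \left( x,S_{n}\right) $; you instead use the squeeze $\rho \left( x,S_{n}\right) \leqslant \left\Vert x-x_{n}\right\Vert <\rho \left( x,S_{n}\right) +2^{-n}$ together with $\left\Vert x-x_{n}\right\Vert \rightarrow \left\Vert x-x_{\infty }\right\Vert $ to conclude that $\left( \rho \left( x,S_{n}\right) \right) $ converges outright, after which monotonicity and your transfer argument for (\ref{6}) deliver $\rho \left( x,S_{\infty }\right) $. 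This is constructively unobjectionable---you exhibit the limit explicitly, so no named completeness principle is needed---and it is arguably cleaner; the price is that it leans on the direction of (\ref{6}) going from existence of $\inf_{n}\rho \left( x,S_{n}\right) $ to existence of $\rho \left( x,S_{\infty }\right) $, which the paper leaves largely implicit but also uses, and which your union/approximation sketch does cover. Your treatment of the final uniqueness clause is likewise a mild variant: you apply the estimate to $u=y$, $v=x_{m}$ and let $m\rightarrow \infty $, whereas the paper applies the parallelogram law directly to $y$ and $x_{\infty }$, using that their midpoint lies in $\overline{S_{\infty }}$ and hence at distance at least $\rho \left( x,S_{\infty }\right) $ from $x$; both are valid, and yours avoids any mention of the closure.
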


\begin{proof}
Suppose that $\rho \left( x,S_{\infty }\right) $ exists. Then $\rho \left(x,S_{\infty }\right) \leqslant \rho \left( x,S_{n}\right) $ for each $n$. On the other hand, given $\varepsilon >0$ we can find $z\in S_{\infty }$ such that $\left\Vert x-z\right\Vert <\rho \left( x,S_{\infty }\right)+\varepsilon $. Pick $N$ such that $z\in S_{N}$. Then for all $n\geqslant N$,
\[\rho \left( x,S_{\infty }\right) \leqslant \rho \left( x,S_{n}\right)\leqslant \rho \left( x,S_{N}\right) \leqslant \left\Vert x-z\right\Vert<\rho \left( x,S_{\infty }\right) +\varepsilon.\]
The desired conclusion (\ref{6}) now follows.

Next, observe that (by the parallelogram law in $H$) if $m\geqslant n$, then
\begin{eqnarray*}
\left\Vert x_{m}-x_{n}\right\Vert ^{2} &\leqslant &\left\Vert \left(x-x_{m}\right) -\left( x-x_{n}\right) \right\Vert ^{2} \\
&=&2\left\Vert x-x_{m}\right\Vert ^{2}+2\left\Vert x-x_{n}\right\Vert^{2}-4\left\Vert x-\frac{1}{2}\left( x_{m}+x_{n}\right) \right\Vert ^{2} \\
&\leqslant &2\left( \rho \left( x,S_{m}\right) +2^{-m}\right) ^{2}+2\left(\rho \left( x,S_{n}\right) +2^{-n}\right) ^{2}-4\rho \left( x,S_{m}\right)^{2},
\end{eqnarray*}
since $\frac{1}{2}\left( x_{m}+x_{n}\right) \in S_{m}$. Thus
\begin{eqnarray}
\left\Vert x_{m}-x_{n}\right\Vert ^{2} &\leqslant &2\left( \left( \rho\left( x,S_{m}\right) +2^{-m}\right) ^{2}-\rho \left( x,S_{m}\right)^{2}\right)  \nonumber \\
&&+2\left( \left( \rho \left( x,S_{n}\right) +2^{-n}\right) ^{2}-\rho \left(x,S_{m}\right) ^{2}\right) .  \label{5}
\end{eqnarray}
If $\rho (x,S_{\infty })$ exists, then, by the first part of the proof, $\rho \left( x,S_{n}\right) \rightarrow \rho \left( x,S_{\infty }\right) $ as $n\rightarrow \infty $. It follows from this and (\ref{5}) that $\left\Vert x_{m}-x_{n}\right\Vert ^{2}\rightarrow 0$ as $m,n\rightarrow \infty $; whence $\left( x_{n}\right) _{n\geqslant 1}$ is a Cauchy sequence in $H$ and therefore converges to a limit $x_{\infty }\in \overline{S_{\infty }}$. Then
\begin{eqnarray*}
\rho \left( x,S_{\infty }\right) &=&\rho \left( x,\overline{S_{\infty }}\right) \leqslant \left\Vert x-x_{\infty }\right\Vert \\
&=&\lim_{n\rightarrow \infty }\left\Vert x-x_{n}\right\Vert \\
&\leqslant &\lim_{n\rightarrow \infty }\left( \rho \left( x,S_{n}\right)+2^{-n}\right) =\rho \left( x,S_{\infty }\right).
\end{eqnarray*}
Thus $\rho \left( x,S_{\infty }\right) =\left\Vert x-x_{\infty }\right\Vert$.

Conversely, suppose that $x_{\infty }=\lim_{n\rightarrow \infty }x_{n}$ exists. Let $0<\alpha <\beta $ and $\varepsilon =\frac{1}{3}\left( \beta-\alpha \right) $. Pick $N$ such that $2^{-N}<\varepsilon $ and $\left\Vert x_{\infty }-x_{n}\right\Vert <\varepsilon $ for all $n\geqslant N$. Either $\left\Vert x-x_{\infty }\right\Vert >\alpha +2\varepsilon $ or $\left\Vert x-x_{\infty }\right\Vert <\beta $. In the first case, for all $n\geqslant N$,
\begin{eqnarray*}
\rho \left( x,S_{n}\right) &>&\left\Vert x-x_{n}\right\Vert -2^{-n} \\
&\geqslant &\left\Vert x-x_{\infty }\right\Vert -\left\Vert x_{\infty}-x_{n}\right\Vert -\varepsilon \\
&>&\left( \alpha +2\varepsilon \right) -\varepsilon -\varepsilon =\alpha .
\end{eqnarray*}
In the other case, there exists $\nu >N$ such that $\left\Vert x-x_{\nu}\right\Vert <\beta $; we then have
\[\rho \left( x,S_{\nu }\right) \leqslant \left\Vert x-x_{\nu }\right\Vert<\beta.\]
It follows from this and the constructive least-upper-bound principle (\cite{BV}, Theorem 2.1.18) that
\[\inf \left\{ \rho \left( x,S_{n}\right) :n\geqslant 1\right\}\]
exists; whence, by (\ref{6}), $d\equiv \rho \left( x,S_{\infty }\right)$ exists.

Finally, suppose that $x_{\infty }$ exists, and consider any $y\in S_{\infty}$ with $y\neq x_{\infty }$. We have
\begin{eqnarray*}
0 &<&\left\Vert y-x_{\infty }\right\Vert ^{2}=\left\Vert y-x-\left(x_{\infty }-x\right) \right\Vert ^{2} \\
&=&2\left\Vert y-x\right\Vert ^{2}+2\left\Vert x_{\infty }-x\right\Vert^{2}-4\left\Vert \frac{y+x_{\infty }}{2}-x\right\Vert ^{2} \\
&=&2\left( \left\Vert y-x\right\Vert ^{2}-d^{2}\right) +2\left( \left\Vert x_{\infty }-x\right\Vert ^{2}-d^{2}\right) =2\left( \left\Vert y-x\right\Vert ^{2}-d^{2}\right),
\end{eqnarray*}
so $\left\Vert x-y\right\Vert >d$.
\end{proof}

For each positive integer $n$ we write
\[\mathfrak{A}_{n}\equiv n\mathfrak{A}_{1}=\left\{ nA:A\in \mathfrak{A}_{1}\right\}.\]
If $\mathfrak{A}_{1}$ is weak-operator totally bounded and hence strong-operator located, then $\mathfrak{A}_{n}$ has those two properties as well.

Our interest in Proposition \ref{2202b} stems from this:
\begin{cor}\label{2202c}
Let $\mathfrak{A}$ be a linear subspace of $\mathcal{B}(H)$ with $\mathfrak{A}_{1}$ weak-operator totally bounded, and let $x,y\in H$. For each $n$, let $y_{n}\in \mathfrak{A}_{n}$ satisfy $\left\Vert y-y_{n}\right\Vert <\rho \left( x,\mathfrak{A}_{n}x\right) +2^{-n}$. Then
\[\rho \left( y,\mathfrak{A}x\right) =\inf_{n\geqslant 1}\rho (y,\mathfrak{A}_{n}x)=\lim_{n\rightarrow \infty }\rho \left( y,\mathfrak{A}_{n}x\right).\]
Moreover, $\rho \left( y,\mathfrak{A}x\right) $ exists if and only if $\left( y_{n}\right) _{n\geqslant 1}$ converges to a limit $y_{\infty }\in H$; in which case, $\rho \left( y,\mathfrak{A}x\right) =\left\Vert y-y_{\infty}\right\Vert $, and $\left\Vert y-Ax\right\Vert >\left\Vert y-y_{\infty}\right\Vert $ for each $A\in \mathfrak{A}$ such that $Ax\neq y_{\infty}$.
\end{cor}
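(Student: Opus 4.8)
The plan is to deduce the corollary directly from Proposition \ref{2202b}, applied to the family $S_n := \mathfrak{A}_n x$ with the point $y$ playing the role that $x$ plays there. The given vectors $y_n \in \mathfrak{A}_n x$, with $\|y - y_n\| < \rho(y, \mathfrak{A}_n x) + 2^{-n}$, are then exactly the near-minimizing sequence $(x_n)$ that the proposition requires. So the work consists in verifying the hypotheses of Proposition \ref{2202b} for $(S_n)$ --- that each $S_n$ is a located convex set, that $S_1 \subset S_2 \subset \cdots$, and that $S_\infty = \bigcup_{n\geqslant 1} S_n$ coincides with $\mathfrak{A}x$ --- after which the three conclusions can be read off verbatim.

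First I would record the structural facts about the $S_n$. Convexity of $S_n = \mathfrak{A}_n x$ is immediate: $\mathfrak{A}_1 = \mathfrak{A} \cap \mathcal{B}_1(H)$ is an intersection of convex sets, $\mathfrak{A}_n = n\mathfrak{A}_1$ is therefore convex, and the image of a convex set under the linear map $T \rightsquigarrow Tx$ is convex. The inclusions $S_n \subset S_{n+1}$ reduce to $\mathfrak{A}_n \subset \mathfrak{A}_{n+1}$, which holds because any $nA$ with $A \in \mathfrak{A}_1$ can be rewritten as $(n+1)\big(\frac{n}{n+1}A\big)$, and $\frac{n}{n+1}A$ again lies in $\mathfrak{A}_1$. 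Locatedness of $S_n$ is precisely the fact, recalled in the introduction and just before the corollary, that the weak-operator total boundedness of $\mathfrak{A}_1$ makes each $\mathfrak{A}_n$ strong-operator located, and hence makes $\mathfrak{A}_n x$ located for every $x \in H$.

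The one step that needs genuine constructive care --- and which I expect to be the main obstacle --- is the identification $S_\infty = \mathfrak{A}x$. The inclusion $\bigcup_{n\geqslant 1} \mathfrak{A}_n x \subset \mathfrak{A}x$ is trivial. For the reverse, given $B \in \mathfrak{A}$ I must produce an $n$ and an $A \in \mathfrak{A}_1$ with $Bx = nAx$. Here it is tempting, but illegitimate, to divide by $\|B\|$, since $B$ need not be normed. Instead I would invoke the constructive definition of a bounded operator: $B$ comes equipped with a constant $c > 0$ such that $\|Bz\| \leqslant c\|z\|$ for all $z \in H$. Choosing a positive integer $n \geqslant c$ and setting $A := n^{-1}B$, we have $A \in \mathfrak{A}$ (a linear subspace) and $\|Az\| \leqslant \|z\|$, so $A \in \mathfrak{A}_1$ and $Bx = nAx \in \mathfrak{A}_n x$. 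This yields $\mathfrak{A}x \subset \bigcup_{n\geqslant 1} \mathfrak{A}_n x$, and hence $S_\infty = \mathfrak{A}x$.

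With these verifications in place, Proposition \ref{2202b}, applied to $(S_n)$ and the point $y$, gives at once that $\rho(y, \mathfrak{A}x) = \inf_{n\geqslant 1} \rho(y, \mathfrak{A}_n x) = \lim_{n\to\infty} \rho(y, \mathfrak{A}_n x)$ in the stated sense that if any exists then all three exist and are equal; that $\rho(y, \mathfrak{A}x)$ exists if and only if $(y_n)$ converges to some $y_\infty \in H$; and that in that case $\rho(y, \mathfrak{A}x) = \|y - y_\infty\|$, with $\|y - z\| > \|y - y_\infty\|$ for every $z \in S_\infty$ distinct from $y_\infty$. Since every such $z$ has the form $Ax$ with $A \in \mathfrak{A}$, the last clause is exactly the assertion that $\|y - Ax\| > \|y - y_\infty\|$ whenever $Ax \neq y_\infty$, which completes the argument.
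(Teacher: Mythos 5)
Your proposal is correct and follows exactly the route the paper intends: Corollary~\ref{2202c} is stated without proof, immediately after the remark that ``our interest in Proposition~\ref{2202b} stems from this,'' i.e.\ it is meant as a direct application of Proposition~\ref{2202b} to $S_n=\mathfrak{A}_n x$ with $y$ in the role of $x$. Your verifications of the hypotheses (convexity and locatedness of each $\mathfrak{A}_n x$, the nesting $\mathfrak{A}_n x\subset\mathfrak{A}_{n+1}x$, and the identification $\bigcup_{n\geqslant 1}\mathfrak{A}_n x=\mathfrak{A}x$ using the constructive bound that comes with each $B\in\mathcal{B}(H)$ rather than a division by a possibly nonexistent norm $\left\Vert B\right\Vert$) are precisely the details the paper leaves to the reader.
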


One case of this corollary arises when the sequence $\left( \rho \left( y,\mathfrak{A}_{n}x\right) \right) _{n\geqslant 1}$ stabilises:
\begin{prop}\label{2202d}
Let $\mathfrak{A}$ be a linear subspace of $\mathcal{B}(H)$ such that $\mathfrak{A}_{1}$ is weak-operator totally bounded. Let $x,y\in H$, and suppose that for some positive integer $N$, $\rho \left( y,\mathfrak{A}_{N}x\right) =\rho \left( y,\mathfrak{A}_{N+1}x\right) $. Then $\rho \left(y,\mathfrak{A}x\right) $ exists and equals $\rho \left( y,\mathfrak{A}_{N}x\right) $.
\end{prop}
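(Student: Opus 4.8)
The plan is to show that the distances $\rho(y,\mathfrak{A}_n x)$ are constant for all $n\geqslant N$, and then to read off the conclusion from Corollary \ref{2202c}. Throughout, write $d_n\equiv\rho(y,\mathfrak{A}_n x)$; recall that each $\mathfrak{A}_n x$ is located (since $\mathfrak{A}_1$, and hence $\mathfrak{A}_n$, is strong-operator located) and convex (being the image of the convex set $\mathfrak{A}_n$ under the linear map $A\rightsquigarrow Ax$), and that $\mathfrak{A}x=\bigcup_{n}\mathfrak{A}_n x$ with $\mathfrak{A}_1 x\subset\mathfrak{A}_2 x\subset\cdots$. Since distances to a set and to its closure coincide, I would pass to $K\equiv\overline{\mathfrak{A}_1 x}$, a located, closed, convex subset of $H$ that contains $0$; then $\overline{\mathfrak{A}_n x}=nK$ and $d_n=\rho(y,nK)$, with $NK\subset(N+1)K\subset\cdots$ nested because $0\in K$.

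First I would record that a located, closed, convex subset $C$ of the Hilbert space $H$ has a \emph{unique} nearest point to $y$: the parallelogram-law argument already used in the proof of Proposition \ref{2202b} shows that any minimizing sequence is Cauchy, and the same law gives uniqueness. Applying this, let $p\in NK$ be the nearest point to $y$, so that $\|y-p\|=d_N$ and $p=Nw$ for some $w\in K$. The hypothesis $d_N=d_{N+1}$ is the crucial input: since $p\in NK\subset(N+1)K$ and $\|y-p\|=d_N=d_{N+1}=\rho(y,(N+1)K)$, uniqueness forces $p$ to be the nearest point to $y$ in $(N+1)K$ as well.

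The heart of the argument is then a variational computation in $(N+1)K$. For the nearest point $p$ one has $\mathrm{Re}\langle y-p,z-p\rangle\leqslant 0$ for every $z\in(N+1)K$, obtained in the standard way by expanding $\|y-((1-t)p+tz)\|^2\geqslant\|y-p\|^2$ and letting $t\downarrow 0$. Taking $z=0$ and $z=\frac{N+1}{N}p=(N+1)w$ (both lie in $(N+1)K$) yields $\mathrm{Re}\langle y-p,p\rangle=0$; taking $z=(N+1)u$ for arbitrary $u\in K$ then gives $\mathrm{Re}\langle y-p,u\rangle\leqslant 0$. Consequently, for each $n\geqslant N$ and each $z=nu\in nK$ with $u\in K$ we get $\mathrm{Re}\langle y-p,z-p\rangle=n\,\mathrm{Re}\langle y-p,u\rangle-\mathrm{Re}\langle y-p,p\rangle\leqslant 0$, whence $\|y-z\|^2=\|y-p\|^2-2\,\mathrm{Re}\langle y-p,z-p\rangle+\|z-p\|^2\geqslant d_N^2$. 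Thus $\rho(y,nK)\geqslant d_N$, while $p\in NK\subset nK$ gives the reverse inequality; so $d_n=\rho(y,\mathfrak{A}_n x)=d_N$ for all $n\geqslant N$.

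It remains to combine this with Corollary \ref{2202c}. Since the sequence $(d_n)$ is eventually constant, $\inf_{n\geqslant 1}\rho(y,\mathfrak{A}_n x)=d_N$ certainly exists; the corollary then guarantees that $\rho(y,\mathfrak{A}x)$ exists and equals this common value $\rho(y,\mathfrak{A}_N x)$, as required. I expect the main obstacle to be constructive rather than algebraic: one must be sure the nearest points genuinely exist (which is exactly where locatedness of $\mathfrak{A}_n x$, hence the weak-operator total boundedness of $\mathfrak{A}_1$, is essential) and that the limiting step in the variational inequality is handled as a uniform estimate $\mathrm{Re}\langle y-p,z-p\rangle\leqslant\frac{t}{2}\|z-p\|^2$ valid for all $t>0$. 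The single clean idea that drives everything is that, by the scaling $nK=\frac{n}{N+1}(N+1)K$, the \emph{one} extremal point $p$ serves as the nearest point to $y$ in every $nK$ with $n\geqslant N$.
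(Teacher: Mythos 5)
Your proof is correct, and while it opens exactly as the paper's does --- both take the unique nearest point to $y$ in $\overline{\mathfrak{A}_{N}x}$ (Theorem 4.3.1 of \cite{BV}; your $p$, the paper's $z$) and exploit the hypothesis $\rho\left(y,\mathfrak{A}_{N}x\right)=\rho\left(y,\mathfrak{A}_{N+1}x\right)$ through a first-variation argument --- the two arguments then diverge. The paper perturbs $z$ by $\lambda Ax$ with $\lambda\in\mathbf{C}$ small and $A\in\mathfrak{A}$ arbitrary, obtaining the full orthogonality $\left\langle y-z,Ax\right\rangle=0$ for every $A\in\mathfrak{A}$; since $y-z$ is then orthogonal to $\overline{\mathfrak{A}x}$, the conclusion follows at once from Pythagoras and the density of $\mathfrak{A}x$ in $\overline{\mathfrak{A}x}$, with no appeal to Corollary \ref{2202c}. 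You instead use only the convex/scaling structure of $K=\overline{\mathfrak{A}_{1}x}$: the one-sided real variational inequality on $(N+1)K$, the test points $0$ and $\frac{N+1}{N}p$ to get $\mathrm{Re}\left\langle y-p,p\right\rangle=0$, hence $\left\Vert y-nu\right\Vert\geqslant d_{N}$ for all $u\in K$ and $n\geqslant N$, so that the distances stabilise and Corollary \ref{2202c} (in its ``if any of the three numbers exists, then all do'' reading, inherited from Proposition \ref{2202b}) finishes. Notably, your route is the one the paper's own framing advertises --- Proposition \ref{2202d} is introduced as the case of Corollary \ref{2202c} in which the sequence stabilises --- yet the paper's proof never actually invokes the corollary, whereas yours genuinely does. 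What the paper's argument buys is brevity and a stronger intermediate fact (orthogonality of $y-z$ to all of $\overline{\mathfrak{A}x}$, which is exactly what one wants when assembling projections); what yours buys is that the variational step needs only real scalars and the fact that $K$ is a located, closed, convex set containing $0$ and stable under the relevant scalings, so the same argument applies to any nested family $(nK)$ whether or not it arises from a linear space of operators. Two small remarks: the appeal to uniqueness of the nearest point in $(N+1)K$ is dispensable, since the hypothesis already gives $\left\Vert y-p\right\Vert=\rho\left(y,(N+1)K\right)$, which is all the variational inequality requires; and your constructive precautions (locatedness for existence of nearest points, the $t$-uniform estimate in place of letting $t\downarrow 0$) are exactly right.
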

\begin{proof}
By Theorem 4.3.1 of \cite{BV}, there exists a unique $z\in \overline{\mathfrak{A}_{N}x}$ such that $\rho \left( y,\mathfrak{A}_{N}x\right)=\left\Vert y-z\right\Vert $. We prove that $y-z$ is orthogonal to $\mathfrak{A}x$. Let $A\in \mathfrak{A}$, and consider $\lambda \in \mathbf{C}$ so small that $\lambda A\in \mathfrak{A}_{1}$. Since,
\[z-\lambda Ax\in \overline{\mathfrak{A}_{N+1}x},\]
we have
\begin{eqnarray*}
\left\langle y-z-\lambda Ax,y-z-\lambda Ax\right\rangle &\geqslant &\rho\left( y,\mathfrak{A}_{N+1}x\right) ^{2} \\
&=&\rho \left( y,\mathfrak{A}_{N}x\right) ^{2}=\left\langle y-z,y-z\right\rangle .
\end{eqnarray*}
This yields
\[\left\vert \lambda \right\vert ^{2}\left\Vert Ax\right\Vert ^{2}+2\func{Re}\left( \lambda \left\langle y-z,Ax\right\rangle \right) \geqslant 0.\]
Suppose that $\func{Re}\left\langle y-z,Ax\right\rangle \neq 0$. Then by taking a sufficiently small real $\lambda$ with 
\[\lambda \func{Re}\left\langle y-z,Ax\right\rangle <0,\] 
we obtain a contradiction. Hence $\func{Re}\left\langle y-z,Ax\right\rangle =0$. Likewise, $\func{Im}\left\langle y-z,Ax\right\rangle =0$. Thus $\left\langle y-z,Ax\right\rangle =0$. Since $A\in \mathfrak{A}$ is arbitrary, we conclude that $y-z$ is orthogonal to $\mathfrak{A}x$ and hence to $\overline{\mathfrak{A}x}$. It is well known that this implies that $z$ is the unique closest point to $y$ in the closed linear subspace $\overline{\mathfrak{A}x}$. Since $\mathfrak{A}x$ is dense in $\overline{\mathfrak{A}x}$, it readily follows that $\rho \left( y,\mathfrak{A}x\right) =\rho \left( y,\overline{\mathfrak{A}x}\right) =\left\Vert y-z\right\Vert $.
\end{proof}

The final result in this section will be used in the proof of our main theorem.

\begin{prop}
\label{2301a2}Let $\mathfrak{A}$ be a linear subspace of $\mathcal{B}(H)$ with weak-operator totally bounded unit ball, and let $x\in H$. Suppose that there exists $r>0$ such that
\[\mathfrak{A}_{1}x\supset B_{\mathfrak{A}x}(0,r)\equiv \mathfrak{A}x\cap B(0,r).\]
Then $\mathfrak{A}x$ is located in $H$; in fact, for each $y\in H$, there exists a positive integer $N$ such that $\rho \left( y,\mathfrak{A}x\right)=\rho \left( y,\mathfrak{A}_{N}x\right) $.
\end{prop}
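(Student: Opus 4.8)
The plan is to fix $y\in H$ and show that the nearest point to $y$ in $\overline{\mathfrak{A}_{N}x}$, for a suitable large $N$, is already the nearest point in the whole subspace $\overline{\mathfrak{A}x}$; the locatedness identity then follows at once. First I would extract from the hypothesis its scaled form. Since $\mathfrak{A}x$ is a linear subspace and $\mathfrak{A}_{n}x=n\mathfrak{A}_{1}x$, multiplying $\mathfrak{A}x\cap B(0,r)\subset \mathfrak{A}_{1}x$ by $n$ gives
\[
\mathfrak{A}x\cap B(0,nr)\subset \mathfrak{A}_{n}x\subset \mathfrak{A}x .
\]
A point of $\overline{\mathfrak{A}x}$ of norm less than $nr$ is a limit of points of $\mathfrak{A}x$ which eventually lie in $B(0,nr)$, so this upgrades to the containment
\[
\overline{\mathfrak{A}x}\cap B(0,nr)\subset \overline{\mathfrak{A}_{n}x},
\]
which is the structural fact driving the proof.

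Given $y$, I would choose $N$ with $Nr>2\Vert y\Vert $. As $\mathfrak{A}_{1}$ is weak-operator totally bounded, $\mathfrak{A}_{N}$ is strong-operator located and hence $\mathfrak{A}_{N}x$ is located; it is convex, being $N$ times the convex set $\mathfrak{A}_{1}x$. By Theorem 4.3.1 of \cite{BV} there is a unique $z\in \overline{\mathfrak{A}_{N}x}$ with $\Vert y-z\Vert =\rho (y,\mathfrak{A}_{N}x)$. Because $0\in \mathfrak{A}_{N}x$ we have $\rho (y,\mathfrak{A}_{N}x)\leqslant \Vert y\Vert $, so $\Vert z\Vert \leqslant \rho (y,\mathfrak{A}_{N}x)+\Vert y\Vert \leqslant 2\Vert y\Vert <Nr$; thus $z$ lies strictly inside $B(0,Nr)$, with room to spare.

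It remains to show that $y-z$ is orthogonal to $\mathfrak{A}x$, which I would carry out exactly as in the proof of Proposition~\ref{2202d}. Fix $A\in \mathfrak{A}$ and put $w=Ax$; for complex $\lambda $ with $\vert \lambda \vert \Vert w\Vert <Nr-2\Vert y\Vert $ the point $z-\lambda w$ lies in $\overline{\mathfrak{A}x}\cap B(0,Nr)$ and hence, by the containment above, in $\overline{\mathfrak{A}_{N}x}$. Minimality of $\Vert y-z\Vert $ over $\overline{\mathfrak{A}_{N}x}$ gives
\[
\vert \lambda \vert ^{2}\Vert w\Vert ^{2}+2\func{Re}\left( \lambda \left\langle y-z,w\right\rangle \right) \geqslant 0
\]
for all such $\lambda $; taking small real $\lambda $ of the appropriate sign forces $\func{Re}\left\langle y-z,w\right\rangle =0$, and likewise $\func{Im}\left\langle y-z,w\right\rangle =0$. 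Since $A\in \mathfrak{A}$ was arbitrary, $y-z$ is orthogonal to $\mathfrak{A}x$ and so to $\overline{\mathfrak{A}x}$.

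Finally, $z\in \overline{\mathfrak{A}x}$ together with $y-z\perp \overline{\mathfrak{A}x}$ makes $z$ the closest point of the closed subspace $\overline{\mathfrak{A}x}$ to $y$; using the density of $\mathfrak{A}x$ in its closure I would conclude
\[
\rho (y,\mathfrak{A}x)=\rho (y,\overline{\mathfrak{A}x})=\Vert y-z\Vert =\rho (y,\mathfrak{A}_{N}x),
\]
which is the asserted identity and shows that $\mathfrak{A}x$ is located. I expect the main obstacle to be the bookkeeping with closures: the minimiser $z$ need not belong to $\mathfrak{A}_{N}x$, nor the perturbations $z-\lambda w$ to $\mathfrak{A}x$, so the whole argument must be run inside $\overline{\mathfrak{A}_{N}x}$, and it is precisely the containment $\overline{\mathfrak{A}x}\cap B(0,nr)\subset \overline{\mathfrak{A}_{n}x}$ that legitimises applying the nearest-point inequality to those perturbations.
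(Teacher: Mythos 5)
Your proof is correct, but it takes a genuinely different route from the paper's. You adapt the machinery of Proposition~\ref{2202d}: invoke Theorem 4.3.1 of \cite{BV} to get the nearest point $z\in\overline{\mathfrak{A}_{N}x}$, upgrade the scaled hypothesis to the closure-level containment $\overline{\mathfrak{A}x}\cap B(0,Nr)\subset\overline{\mathfrak{A}_{N}x}$, bound $\Vert z\Vert\leqslant 2\Vert y\Vert<Nr$ so that the perturbations $z-\lambda Ax$ stay inside that ball, and then run the perturbation argument to show $y-z\perp\mathfrak{A}x$; all of these steps are constructively sound (in particular the closure containment, since $\Vert u\Vert<Nr$ is a strict inequality, and the sign argument for $\func{Re}\left\langle y-z,Ax\right\rangle=0$, which is exactly the one used in Proposition~\ref{2202d}). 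The paper instead gives a purely metric, more elementary argument requiring neither the nearest-point theorem nor orthogonality nor any closure bookkeeping: with the same choice $N>2\left\Vert y\right\Vert/r$, suppose $\left\Vert y-Ax\right\Vert<\rho\left(y,\mathfrak{A}_{N}x\right)$ for some $A\in\mathfrak{A}$; by cotransitivity either $\left\Vert Ax\right\Vert<Nr$, in which case the scaled hypothesis puts $Ax$ in $\mathfrak{A}_{N}x$ and contradicts the supposition directly, or $\left\Vert Ax\right\Vert>2\left\Vert y\right\Vert$, in which case $\left\Vert y-Ax\right\Vert>\left\Vert y\right\Vert\geqslant\rho\left(y,\mathfrak{A}_{N}x\right)$ (since $0\in\mathfrak{A}_{N}x$), again a contradiction; hence $\rho\left(y,\mathfrak{A}_{N}x\right)$ is a lower bound for $\left\Vert y-Ax\right\Vert$ over all of $\mathfrak{A}$, and since it is approximated within $\mathfrak{A}_{N}x\subset\mathfrak{A}x$, the infimum over $\mathfrak{A}x$ exists and equals it. What your heavier route buys is a little more information: you exhibit the actual closest point $z$ of $\overline{\mathfrak{A}x}$ to $y$ (i.e., the value of the eventual projection at $y$), and you unify this proposition with Proposition~\ref{2202d} under a single technique; what the paper's route buys is economy, using only the locatedness of $\mathfrak{A}_{N}x$ and the constructive dichotomy for real numbers.
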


\begin{proof}
Fixing $y\in H$, compute a positive integer $N>2\left\Vert y\right\Vert /r$. Let $A\in \mathfrak{A}$, and suppose that
\[\left\Vert y-Ax\right\Vert <\rho \left( y,\mathfrak{A}_{N}x\right).\]
We have either $\left\Vert Ax\right\Vert <Nr$ or $\left\Vert Ax\right\Vert>2\left\Vert y\right\Vert $. In the first case, $N^{-1}Ax\in B_{\mathfrak{A}x}(0,r)$, so there exists $B\in \mathfrak{A}_{1}$ with $N^{-1}Ax=Bx$ and therefore $Ax=NBx$. But $NB\in \mathfrak{A}_{N}$, so
\[\left\Vert y-Ax\right\Vert =\left\Vert y-NBx\right\Vert \geqslant \rho\left( y,\mathfrak{A}_{N}x\right),\]
a contradiction. In the case $\left\Vert Ax\right\Vert \geqslant Nr>2\left\Vert y\right\Vert $, we have
\[\left\Vert y-Ax\right\Vert \geqslant \left\Vert Ax\right\Vert -\left\Vert y\right\Vert >\left\Vert y\right\Vert \geqslant \rho \left( y,\mathfrak{A}_{N}x\right),\]
another contradiction. We conclude that $\left\Vert y-Ax\right\Vert \geqslant \rho \left( y,\mathfrak{A}_{N}x\right) $ for each $A\in \mathfrak{A}$. On the other hand, given $\varepsilon >0$, we can find $A\in \mathfrak{A}_{N}$ such that $\left\Vert y-Ax\right\Vert <\rho \left( y,\mathfrak{A}_{N}x\right) +\varepsilon $. It now follows that $\rho \left( y,\mathfrak{A}x\right) $ exists and equals $\rho \left( y,\mathfrak{A}_{N}x\right) $.
\end{proof}

\section{Generalising the open mapping theorem}\label{sec:3}

The key to our main result on the existence of projections of the form $\left[ \mathfrak{A}x\right] $ is a generalisation of the open mapping theorem from functional analysis (\cite{BV}, Theorem 6.6.4). Before giving that generalisation, we note a proposition and a lemma.

\begin{prop}
\label{2802a0}If $C$ is a balanced, convex subset of a normed space $X$, then $V\equiv \dbigcup\limits_{n\geqslant 1}nC$ is a linear subspace of $X$.
\end{prop}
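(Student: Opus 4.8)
The plan is to show that $V \equiv \bigcup_{n \geq 1} nC$ is closed under addition and under scalar multiplication, since these two closure properties characterise a linear subspace. I would verify each in turn, exploiting the two structural hypotheses on $C$: convexity and the balanced property (recall $C$ is balanced means $\lambda C \subset C$ whenever $|\lambda| \leq 1$).

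First I would handle scalar multiplication, which is the easier of the two. Given $v \in V$, write $v = nc$ with $c \in C$ for some positive integer $n$, and let $\mu$ be any scalar. I want to produce a positive integer $m$ and an element $c' \in C$ with $\mu v = mc'$. Choose a positive integer $m \geq |\mu| n$; then $\frac{\mu n}{m}$ has absolute value at most $1$, so $c' \equiv \frac{\mu n}{m} c$ lies in $C$ because $C$ is balanced. Then $mc' = \mu n c = \mu v$, so $\mu v \in V$. (The case $\mu = 0$ is covered since $0 = 1 \cdot 0$ and $0 \in C$, the latter holding because $C$ is balanced and nonempty—or, if one prefers, one simply includes $0$ from the outset.)

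Next I would treat addition. Given $u, v \in V$, write $u = mc$ and $v = nd$ with $c, d \in C$ and $m, n$ positive integers. The natural idea is to form a convex combination: since $C$ is convex, the point
\[
w \equiv \frac{m}{m+n}\, c + \frac{n}{m+n}\, d
\]
lies in $C$, and a direct computation gives $(m+n)w = mc + nd = u + v$. Hence $u + v = (m+n)w \in V$, as required.

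The main point to watch—rather than a genuine obstacle—is the constructive bookkeeping: the integers $m, n$ witnessing membership in $V$ must be produced explicitly, and the scalar-multiplication step requires choosing $m$ above a real bound $|\mu| n$, which is unproblematic since we may always round up to the next integer. Both the balanced hypothesis (for scalars of modulus $\leq 1$) and convexity (for the convex combination in the addition step) are used in an essential and constructively harmless way, so no appeal to excluded middle or to any choice principle beyond what the ambient framework already permits is needed. I would conclude by noting that $V$ is thereby closed under both operations and hence is a linear subspace of $X$.
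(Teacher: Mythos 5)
Your proof is correct, and on the crucial step it takes a genuinely cleaner route than the paper's. For addition the two arguments are essentially the same use of convexity: you form the weighted combination $w=\frac{m}{m+n}c+\frac{n}{m+n}d$, whereas the paper first places both summands in a common $NC$ (using that $(nC)_{n\geqslant 1}$ is ascending, by balancedness) and then takes a midpoint; the difference is cosmetic. The divergence is in scalar multiplication. The paper's witness uses the unit-modulus scalar $|\alpha|^{-1}\alpha$, which forces the restriction $\alpha\neq 0$; since $\alpha\neq 0$ is not constructively decidable, the paper then has to recover arbitrary $\alpha$ via the shift $1+\alpha$ and the convexity identity $\alpha x=2\cdot\frac{(1+\alpha)x+(-x)}{2}$, with $-x\in nC$ by balancedness. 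Your witness $c'=\frac{\mu n}{m}c$, with $m$ an integer at least $|\mu|n$, involves no division by $|\mu|$: it needs only $\left|\mu n/m\right|\leqslant 1$, so balancedness applies uniformly in $\mu$, zero or not. This is shorter; it also makes your parenthetical about the case $\mu=0$ superfluous (the whole point of your construction is that no case split is needed); and it even sidesteps a point the paper leaves implicit, namely that applying its nonzero case to $1+\alpha$ requires knowing $1+\alpha\neq 0$, which is no more decidable than $\alpha\neq 0$ (repairing this needs a cotransitivity split: either $|1+\alpha|>0$, or $|1+\alpha|<1$ and then balancedness applies directly). Your uniform choice of witness removes that difficulty rather than working around it.
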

\begin{proof}
Let $x\in V$ and $\alpha \in \mathbf{C}$. Pick a positive integer $n$ and an element $c$ of $C$ such that $x=nc$. If $\alpha \neq 0$, then since $C$ is balanced, $\left\vert \alpha \right\vert ^{-1}\alpha c\in C$, so
\[\alpha x=\alpha nc=\left\vert \alpha \right\vert n\left\vert \alpha\right\vert ^{-1}\alpha c\in \left\vert \alpha \right\vert nC\subset \left(1+\left\vert \alpha \right\vert \right) nC.\]
In the general case, we can apply what we have just proved to show that
\[\left( 1+\alpha \right) x\in \left( 1+\left\vert 1+\alpha \right\vert\right) nC\subset \left( 2+\left\vert \alpha \right\vert \right) nC\text{.}\]
Now, since $C$ is balanced,
\[-x=n\left( -c\right) \in nC\subset (2+\left\vert \alpha \right\vert )nC.\]
Hence, by the convexity of $(2+\left\vert \alpha \right\vert )nC$,
\[\alpha x=2\frac{(1+\alpha )x-x}{2}\in 2(2+\left\vert \alpha \right\vert )nC.\]
Taking $N$ as any integer $>2(2+\left\vert \alpha \right\vert )n$, we now see that $\alpha x\in NC\subset V$. In view of the foregoing and the fact that $\left( nC\right) _{n\geqslant 1}$ is an ascending sequence of sets, if $x^{\prime }$ also belongs to $V$ we can take $N$ large enough to ensure that $\alpha x$ and $x^{\prime }$ both belong to $NC$. Picking $c,c^{\prime}\in C$ such that $\alpha x=Nc$ and $x^{\prime }=Nc^{\prime }$, we obtain
\[\alpha x+x^{\prime }=2N\left( \frac{c+c^{\prime }}{2}\right) \in 2NC,\]
so $\alpha x+x^{\prime }\in V$.
\end{proof}

We call a bounded subset $C$ of a Banach space $X$ \textbf{\emph{superconvex}} if for each sequence $\left( x_{n}\right) _{n\geqslant 1}$ in $C$ and each sequence $\left( \lambda _{n}\right) _{n\geqslant 1}$ of nonnegative numbers such that $\sum_{n=1}^{\infty }\lambda _{n}$ converges to $1$ and the series $\sum_{n=1}^{\infty }\lambda _{n}x_{n}$ converges, we have $\sum_{n=1}^{\infty }\lambda _{n}x_{n}\in C$. In that case, $C$ is clearly convex.

\begin{lem}
\label{2802a2}Let $C$ be a located, bounded, balanced, and superconvex
subset of a Banach space $X$, such that $X=\dbigcup\limits_{n\geqslant 1}nC$%
. Let $y\in X$ and $r>\left\Vert y\right\Vert $. Then there exists $\xi \in
2C$ such that if $y\neq \xi $, then $\rho \left( z,C\right) >0$ for some $z$
with $\left\Vert z\right\Vert <r$.
\end{lem}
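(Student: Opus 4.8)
The plan is to adapt the classical successive-approximation proof of the open mapping theorem, replacing its appeal to a closure inclusion $\overline{C}\supset B(0,r)$ by a constructive dichotomy supplied by the locatedness of $C$, and replacing the final passage to a limit by an appeal to superconvexity. The single point $\xi$ will be the sum of a geometric series of approximants drawn from rescaled copies of $C$; it will be defined once and for all, so that the awkward classical case split ``either $\xi=y$ or we escape'' never has to be decided.

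First I would fix a bound $M$ with $\|c\|\le M$ for all $c\in C$ (using boundedness of $C$) and note that $0\in C$, since $C$ is nonempty and balanced. I would then build, by dependent choice, a sequence of states together with approximants $c_n\in 2^{-(n-1)}C$, maintaining the invariant that a live state at stage $n$ carries a remainder $y_n$ with $\|y_n\|<r\,2^{-n}$, starting from $y_0=y$, where $\|y_0\|<r$. At a live stage I rescale to $z_n=2^n y_n$, so that $\|z_n\|<r$; since $0<\frac{1}{2}r$, constructive comparison of the real number $\rho(z_n,C)\ge 0$ gives $\rho(z_n,C)>0$ or $\rho(z_n,C)<\frac{1}{2}r$. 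In the first case I record the escape witness $z_n$ and freeze the state, setting all further $c_k=0$; in the second I choose $c\in C$ with $\|z_n-c\|<\frac{1}{2}r$ and put $c_{n+1}=2^{-n}c$, $y_{n+1}=y_n-c_{n+1}$, so that $\|y_{n+1}\|<r\,2^{-(n+1)}$ and the invariant persists. Writing $c_n=2^{-(n-1)}d_n$ with $d_n\in C$ (taking $d_n=0$ once frozen) unifies the two branches.

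The point is then $\xi:=\sum_{n\ge 1}c_n=2\sum_{n\ge 1}2^{-n}d_n$, a series that converges absolutely because $\|d_n\|\le M$. Since the weights $2^{-n}$ are nonnegative and sum to $1$, superconvexity gives $\frac{1}{2}\xi\in C$, hence $\xi\in 2C$; crucially this argument is insensitive to whether or when the escape branch was taken, so $\xi\in 2C$ holds unconditionally and no case analysis is needed to place $\xi$ in $2C$.

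The main obstacle, and the only genuinely constructive step, is the implication: given $y\ne\xi$, i.e. $\|y-\xi\|>0$, I must actually produce the witness $z$ rather than merely refute ``$\xi=y$''. Here I would use a quantitative tail estimate. If the state at stage $n$ is still live, then $y-\xi=y_n-\sum_{k>n}c_k$, whence $\|y-\xi\|\le\|y_n\|+\sum_{k>n}2^{-(k-1)}M<2^{-n}(r+2M)$. Choosing $N$ with $2^{-N}(r+2M)<\|y-\xi\|$ makes a live state at stage $N$ contradictory; and since ``live versus frozen'' is decidable at each stage, the state at stage $N$ must be the frozen one, which carries a recorded $z$ with $\|z\|<r$ and $\rho(z,C)>0$. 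That is exactly the desired conclusion. I expect the bookkeeping around the frozen states---ensuring that the invariant, the membership $c_n\in 2^{-(n-1)}C$, and the identity $y-\xi=y_n-\sum_{k>n}c_k$ all survive the branch---to be the fiddly part, but no step beyond the real-number comparison requires more than the stated hypotheses.
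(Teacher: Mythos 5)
Your proof is correct and follows essentially the same route as the paper's: the same locatedness dichotomy ($\rho(\cdot,C)>0$ or $\rho(\cdot,C)<r/2$) driving a construction by dependent choice with a binary live/frozen flag, the same appeal to superconvexity with weights $2^{-n}$ to place $\xi\in 2C$ unconditionally, and the same tail estimate combined with decidability of the flag to extract the witness $z$ when $y\neq\xi$. Your remainders $y_{n}$ and rescaled approximants $c_{n}\in 2^{-(n-1)}C$ are just a change of variables from the paper's blown-up quantities $2^{n}y-\sum_{i=1}^{n}2^{n-i}x_{i}$, so the two arguments coincide step for step.
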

\begin{proof}
Either $\rho \left( y,C\right) >0$ and we take $z=y$, or else, as we suppose, $\rho \left( y,C\right) <r/2$. Choosing $x_{1}\in 2C$ such that $\left\Vert y-\frac{1}{2}x_{1}\right\Vert <r/2$ and therefore $\left\Vert 2y-x_{1}\right\Vert <r$, set $\lambda _{1}=0$. Then either $\rho \left(2y-x_{1},C\right) >0$ or $\rho \left( 2y-x_{1},C\right) <r/2$. In the first case, set $\lambda _{k}=1$ and $x_{k}=0$ for all $k\geqslant 2$. In the second case, pick $x_{2}\in 2C$ such that $\left\Vert 2y-x_{1}-\frac{1}{2}x_{2}\right\Vert <r/2$ and therefore $\left\Vert 2^{2}y-2x_{1}-x_{2}\right\Vert <r$, and set $\lambda _{2}=0$. Carrying on in this way, we construct a sequence $\left( x_{n}\right) _{n\geqslant 1}$ in $2C$, and an increasing binary sequence $\left( \lambda _{n}\right)_{n\geqslant 1}$ with the following properties.
\begin{itemize}
\item If $\lambda _{n}=0$, then
\[\rho \left( 2^{n-1}y-\sum_{i=1}^{n}2^{n-i-1}x_{i},C\right) <\frac{r}{2}\]
and
\[\left\Vert 2^{n}y-\sum_{i=1}^{n}2^{n-i}x_{i}\right\Vert <r.\]
\item If $\lambda _{n}=1-\lambda _{n-1}$, then
\[\rho \left( 2^{n-1}y-\sum_{i=1}^{n}2^{n-i-1}x_{i},C\right) >0\]
and $x_{k}=0$ for all $k\geqslant n$.
\end{itemize}
Compute $\alpha >0$ such that $\left\Vert x\right\Vert <\alpha $ for all $x\in 2C$. Then the series $\sum_{i=1}^{\infty }2^{-i}x_{i}$ converges, by comparison with $\left\vert \alpha \right\vert \sum_{i=1}^{\infty }2^{-i}$, to a sum $\xi $ in the Banach space $X$. Since $\sum_{i=1}^{\infty }2^{-i}=1$ and $C$ is superconvex, we see that
\[\sum_{i=1}^{\infty }2^{-i}x_{i}=2\sum_{i=1}^{\infty }2^{-i}\left( \frac{1}{2}x_{i}\right) \in 2C.\]
If $y\neq \xi $, then there exists $N$ such that 
\[\left\Vert y-\sum_{i=1}^{N}2^{-i}x_{i}\right\Vert >2^{-N}r\]
and therefore
\[\left\Vert 2^{N}y-\sum_{i=1}^{N}2^{N-i}x_{i}\right\Vert >r.\]
It follows that we cannot have $\lambda _{N}=0$, so $\lambda _{N}=1$ and therefore there exists $\nu \leqslant N$ such that $\lambda _{\nu}=1-\lambda _{\nu -1}$. Setting
\[z\equiv 2^{\nu -1}y-\sum_{i=1}^{\nu -1}2^{\nu -i-1}x_{i},\]
we see that $\rho (z,C)>0$ and $\left\Vert z\right\Vert <r$, as required.
\end{proof}

We now prove our generalisation of the open mapping theorem.
\begin{thm}
\label{2802a3}Let $X$ be a Banach space,and $C$ a located, bounded, balanced, and superconvex subset of $X$ such that $\rho \left( 0,-C\right)$ exists and $X=\dbigcup\limits_{n\geqslant 1}nC$. Then there exists $r>0$ such that $B\left( 0,r\right) \subset C$.
\end{thm}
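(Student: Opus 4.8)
The plan is to separate the argument into a short deduction from Lemma~\ref{2802a2} and a harder positivity statement. Suppose, for the moment, that we have produced a number $c>0$ with $B(0,c)\subseteq\overline{C}$. Fix $r$ with $0<r<c$, and let $y$ be any point with $\|y\|<r$. Since $r>\|y\|$, Lemma~\ref{2802a2} supplies $\xi\in 2C$ such that, if $y\neq\xi$, then $\rho(z,C)>0$ for some $z$ with $\|z\|<r$. Any such $z$ satisfies $\|z\|<r<c$, so $z\in B(0,c)\subseteq\overline{C}$ and $\rho(z,C)=0$, a contradiction; hence $\neg(y\neq\xi)$, that is, $y=\xi\in 2C$. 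Thus $B(0,r)\subseteq 2C$, and consequently $B(0,r/2)\subseteq C$: the conclusion holds with radius $r/2$.

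This reduces everything to producing a positive $c$ with $B(0,c)\subseteq\overline{C}$. I would first record the trivial half of the relationship between this ball and the hypothesis: writing $d\equiv\rho(0,-C)$, any $y$ with $\|y\|<d$ cannot belong to $-C$ (otherwise $\|y\|\geqslant d$), so $\rho(y,C)=0$ and $y\in\overline{C}$; thus $B(0,d)\subseteq\overline{C}$, and it suffices to prove $d>0$. Note that this positivity is the genuine content of the theorem, since the conclusion $B(0,r)\subseteq C$ by itself forces $\rho(0,-C)\geqslant r>0$; the hypothesis only gives $d$ as a nonnegative infimum, and the work is to upgrade its mere existence to positivity.

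To obtain $d>0$ I would exploit the absorbing condition together with the completeness of $X$. Since $C$ is located, so is $\overline{C}$ (with the same distance function), and hence so is each closed set $n\overline{C}$; moreover $X=\dbigcup\limits_{n\geqslant 1}n\overline{C}$. A Baire-category argument then yields an index $N$ and a ball $B(a,s)\subseteq N\overline{C}$, whence $B(a/N,s/N)\subseteq\overline{C}$. Using that $\overline{C}$ is balanced and convex (convexity coming from superconvexity), I can recentre at the origin: for every $v$ with $\|v\|<s/N$ both $a/N+v$ and $-a/N+v$ lie in $\overline{C}$, so their midpoint $v$ does as well, giving $B(0,s/N)\subseteq\overline{C}$ and therefore $d\geqslant s/N>0$.

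The hard part will be this positivity step, namely locating a genuine interior point of some $n\overline{C}$ constructively. The classical Baire argument is not available as it stands, and the reason for assuming that $\rho(0,-C)$ exists is precisely to furnish the located-complement data that should make such a search effective; converting that data into an explicit positive radius, rather than a possibly-zero infimum, is where the essential difficulty lies. By contrast, once any positive $c$ with $B(0,c)\subseteq\overline{C}$ is available, the passage to $B(0,r/2)\subseteq C$ through Lemma~\ref{2802a2} is immediate.
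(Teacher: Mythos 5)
Your two reduction steps are correct, and they reproduce the paper's own endgame almost verbatim. The passage from a ball $B(0,c)\subseteq\overline{C}$ to a ball inside $C$ itself---apply Lemma~\ref{2802a2} to each $y$, note that a $z$ with $\left\Vert z\right\Vert <r<c$ and $\rho (z,C)>0$ would contradict $z\in \overline{C}$, and conclude $\neg (y\neq \xi )$, hence $y=\xi \in 2C$---is exactly the paper's final step (the paper gets $B(0,2r)\subseteq \overline{C}$, deduces $y=\xi \in 2C$ for every $y$ in that ball, and halves the radius). Your recentring argument, using balancedness and convexity of $\overline{C}$ to move an interior point of $N\overline{C}$ to the origin, is the paper's appeal to Lemma 6.6.3 of \cite{BV}. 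And your reformulation that everything comes down to the positivity of $d\equiv \rho (0,-C)$ is correct, since $C$ being located means any point not in $-C$ has $\rho (\cdot ,C)=0$ and so lies in $\overline{C}$.

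The problem is the step you leave open, which is the mathematical core of the theorem: producing $N$ and a ball $B(a,s)\subseteq N\overline{C}$. You state candidly that the classical Baire argument is unavailable and that you do not know how to make the search constructive; as it stands, what you have proved is only the (comparatively easy) implication ``if some $n\overline{C}$ has inhabited interior, then $B(0,r)\subseteq C$ for some $r>0$''. The paper fills this gap not with an ad hoc argument but by citing a prior result: Theorem 6.6.1 of \cite{BV} (the constructive version of Baire's theorem of Bridges, Ishihara and V\^{\i}\c{t}\u{a} \cite{BHV1}), applied to the identity $X=\bigcup_{n\geqslant 1}\overline{nC}$. The sets $\overline{nC}$ are closed and located (locatedness passing from $C$ to $nC$ to its closure), and the otherwise-unused hypothesis that $\rho (0,-C)$ exists is what makes that citation legitimate: it appears nowhere in your reduction steps, nor in the paper's final step, and it supplies the metric-complement data attached to the sets $\overline{nC}$ (note that $-\overline{nC}=n(-C)$, so $\rho (0,-\overline{nC})=n\rho (0,-C)$ exists). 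So your instinct about where that hypothesis must enter is sound---and the paper's concluding Brouwerian example confirms it cannot be dropped---but guessing where a hypothesis should be used is not the same as carrying out the step that uses it. With the Bridges--Ishihara--V\^{\i}\c{t}\u{a} Baire theorem in hand, your outline becomes a complete proof essentially identical to the paper's; without it, the theorem remains unproved.
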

\begin{proof}
Consider the identity
\[X=\dbigcup\limits_{n\geqslant 1}\overline{nC}.\]
By Theorem 6.6.1 of \cite{BV} (see also \cite{BHV1}), there exists $N$ such that the interior of $\overline{NC}$ is inhabited. Thus there exist $y_{0}\in NC$ and $R>0$ such that $B\left( y_{0},R\right) \subset \overline{NC}$. Writing $y_{1}=N^{-1}y_{0}$ and $r=\left( 2N\right) ^{-1}R$, we obtain $B\left( y_{1},2r\right) \subset \overline{C}.$It follows from Lemma 6.6.3 of \cite{BV} that $B\left( 0,2r\right) \subset \overline{C}$. Now consider any $y\in B\left( 0,2r\right) $. By Lemma \ref{2802a2}, there exists $\xi \in 2C$ such that if $y\neq \xi $, then there exists $z\in B(0,2r)$ with $\rho\left( z,C\right) >0$. Since $B\left( 0,2r\right) \subset \overline{C}$, this is absurd. Hence $y=\xi \in 2C$. It follows that $B\left( 0,2r\right)\subset 2C$ and hence that $B\left( 0,r\right) \subset C$.
\end{proof}

Note that in Lemma \ref{2802a2} and Theorem \ref{2802a3} we can replace the superconvexity of $C$ by these two properties: $C$ is convex, and for each sequence $\left( x_{n}\right) _{n\geqslant 1}$ in $C$, if $\sum_{n=1}^{\infty }2^{-n}x_{n}$ converges in $H$, then its sum belongs to $C$.

We now derive two corollaries of Theorem \ref{2802a3}.

\begin{cor}[\textbf{The open mapping theorem} (\cite{BV},
Theorem 6.6.4)\label{2802a4}\footnote{This is but one version of the open mapping
theorem; for another, see \cite{BI}.}]
Let $X,Y$ be Banach spaces, and $T$ a sequentially continuous linear mapping of $X$ onto $Y$ such that $T\left( \overline{B(0,1)}\right) $ is located and $\rho \left( 0,-T\left( \overline{B(0,1)}\right) \right) $ exists. Then there exists $r>0$ such that $B\left(0,r\right) \subset T\left( \overline{B\left( 0,1\right) }\right) $.
\end{cor}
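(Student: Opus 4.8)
The plan is to invoke Theorem~\ref{2802a3} directly, with its ambient Banach space taken to be the codomain $Y$ and with $C\equiv T\left(\overline{B(0,1)}\right)$. The conclusion of that theorem---the existence of $r>0$ with $B(0,r)\subset C$---is exactly what we want, so the whole argument reduces to checking that this $C$ satisfies the hypotheses of Theorem~\ref{2802a3}: that it is located, bounded, balanced and superconvex, that $\rho(0,-C)$ exists, and that $Y=\bigcup_{n\geqslant 1}nC$.

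Several of these are immediate or routine. Locatedness of $C$ and the existence of $\rho(0,-C)$ are hypotheses of the corollary. Since $\overline{B(0,1)}$ is balanced and $T$ is linear, $\alpha C\subset C$ whenever $\left\vert\alpha\right\vert\leqslant 1$, so $C$ is balanced. For the covering $Y=\bigcup_{n\geqslant 1}nC$ I would use the surjectivity of $T$: given $y\in Y$, choose $x\in X$ with $Tx=y$ and an integer $n\geqslant\left\Vert x\right\Vert$; then $n^{-1}x\in\overline{B(0,1)}$ and $y=nT\left(n^{-1}x\right)\in nC$.

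The two remaining properties are where the sequential continuity of $T$ and the completeness of $X$ do the real work. Boundedness of $C$ follows from the standard fact that a sequentially continuous linear map is bounded, yielding $\alpha>0$ with $\left\Vert Tx\right\Vert\leqslant\alpha$ for all $x\in\overline{B(0,1)}$. For superconvexity, suppose $y_{n}=Tx_{n}$ with $x_{n}\in\overline{B(0,1)}$ (such a choice of representatives being available by dependent choice), and let $\lambda_{n}\geqslant 0$ with $\sum_{n}\lambda_{n}=1$ and $\sum_{n}\lambda_{n}y_{n}$ convergent. I would then form the series $\sum_{n}\lambda_{n}x_{n}$ in $X$: its partial sums are Cauchy, since $\left\Vert\sum_{n=M}^{N}\lambda_{n}x_{n}\right\Vert\leqslant\sum_{n=M}^{N}\lambda_{n}\left\Vert x_{n}\right\Vert\leqslant\sum_{n=M}^{N}\lambda_{n}$ and $\sum_{n}\lambda_{n}$ converges, so by completeness of $X$ the series converges to some $x$ with $\left\Vert x\right\Vert\leqslant\sum_{n}\lambda_{n}\left\Vert x_{n}\right\Vert\leqslant 1$; hence $x\in\overline{B(0,1)}$. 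Applying the linearity and sequential continuity of $T$ to these partial sums gives $Tx=\sum_{n}\lambda_{n}y_{n}$, so $\sum_{n}\lambda_{n}y_{n}\in C$.

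I expect the superconvexity verification to be the crux of the argument: it is the one step that genuinely exploits the completeness of the domain together with the sequential continuity of $T$, and it is essential that the limit $x$ of $\sum_{n}\lambda_{n}x_{n}$ land back in $\overline{B(0,1)}$. The boundedness of $C$, which likewise rests on sequential continuity, is the other point at which one leans on the constructive theory of linear maps. Once all the hypotheses are in place, Theorem~\ref{2802a3} delivers the required $r>0$ with $B(0,r)\subset T\left(\overline{B(0,1)}\right)$.
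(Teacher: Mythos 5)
Your proposal follows the paper's proof in both structure and substance: the paper likewise reduces the corollary to Theorem~\ref{2802a3} with $C\equiv T\left(\overline{B(0,1)}\right)$, declares that it suffices to prove $C$ superconvex, and then gives exactly your superconvexity argument---pull points of $C$ back to $\overline{B(0,1)}$, note that $\sum_{n}\lambda_{n}x_{n}$ converges in $X$ by comparison (using completeness of $X$), that its sum has norm at most $1$, and that sequential continuity of $T$ identifies $T\left(\sum_{n}\lambda_{n}x_{n}\right)$ with $\sum_{n}\lambda_{n}Tx_{n}$, which therefore lies in $C$. Your checks of balancedness and of $Y=\bigcup_{n\geqslant 1}nC$ via surjectivity are the routine verifications the paper leaves implicit, and they are fine.

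The one place where you go beyond what the paper says is also the one step that is not available in the paper's setting: you justify the boundedness of $C$ by ``the standard fact that a sequentially continuous linear map is bounded''. That fact is classical, not constructive. By results of Ishihara, the statement that every sequentially continuous linear mapping of a (separable) Banach space into a normed space is bounded is equivalent, over Bishop-style mathematics, to the principle \textbf{BD-N}, which is not derivable in the framework of this paper (intuitionistic logic with dependent choice); so that appeal is off-limits here. To be fair, the paper itself never addresses the boundedness of $C$ at all---it is silently subsumed in ``it will suffice to prove that $C$ is superconvex''---so your argument agrees with the paper's everywhere the paper is explicit. If you want a constructively watertight route, you cannot cite the classical fact; instead, observe that the only use of boundedness of $C$ in the proof of Lemma~\ref{2802a2} is to make the series $\sum_{i}2^{-i}x_{i}$ converge, and in the present situation that convergence follows from completeness of $X$ and sequential continuity of $T$ by exactly the same device as in your superconvexity argument (pull back to $X$, sum there, push forward). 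Alternatively one simply adds boundedness of $T\left(\overline{B(0,1)}\right)$ to the hypotheses of the corollary.
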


\begin{proof}
In view of Theorem \ref{2802a3}, it will suffice to prove that $C\equiv T\left( \overline{B\left( 0,1\right) }\right)$ is superconvex. But if $\left( x_{n}\right) _{n\geqslant 1}$ is a sequence in $\overline{B\left(0,1\right) }$ and $\left( \lambda _{n}\right) _{n\geqslant 1}$ is a sequence of nonnegative numbers such that $\sum_{n=1}^{\infty }\lambda _{n}=1$, then $\left\Vert \lambda _{n}x_{n}\right\Vert \leqslant \lambda _{n}$ for each $n$, so $\sum_{n=1}^{\infty }\lambda _{n}x_{n}$ converges in $X$; moreover,
\[\left\Vert \sum_{n=1}^{\infty }\lambda _{n}x_{n}\right\Vert \leqslant\sum_{n=1}^{\infty }\lambda _{n}=1,\]
so, by the sequential continuity of $T$,
\[T\left( \sum_{n=1}^{\infty }\lambda _{n}x_{n}\right) \in C.\]
Thus $C$ is superconvex.
\end{proof}

Theorem \ref{2802a3} also leads to the \emph{\textbf{proof of Theorem \ref{2202a}:}}

\begin{proof}
Taking $C\equiv \mathfrak{A}_{1}x$, we know that $C$ is located (since $\mathfrak{A}_{1}$ is weak-operator totally bounded and hence, by \cite{BHV,Spitters}, strong-operator located), as well as bounded and balanced. To prove that $C$ is superconvex, consider a sequence$~\left( A_{n}\right)_{n\geqslant 1}$ in $\mathfrak{A}_{1}$, and a sequence $\left( \lambda_{n}\right) _{n\geqslant 1}$ of nonnegative numbers such that $\sum_{n=1}^{\infty }\lambda _{n}$ converges to $1$. For $k\geqslant j$ we have 
\[\left\Vert \sum_{n=j}^{k}\lambda _{n}A_{n}\right\Vert \leqslant\sum_{n=j}^{k}\lambda _{n},\]
so $\sum_{n=1}^{\infty }\lambda _{n}A_{n}$ converges uniformly to an element $A$ of $\mathcal{B}_{1}(H)$. Since $\mathfrak{A}$ is uniformly closed, $A\in \mathfrak{A}_{1}$, so $\sum_{n=1}^{\infty }\lambda _{n}A_{n}x=Ax\in \mathfrak{A}_{1}x$. Thus $C$ is superconvex. We can now apply Theorem \ref{2802a3}, to produce $r>0$ such that $B_{\mathfrak{A}x}\left( 0,r\right)\subset C$. The locatedness of $\mathfrak{A}x$, and the consequent existence of the projection $\left[ \mathfrak{A}x\right] $, now follow from Proposition \ref{2301a2}.\label{ere}
\end{proof}

We now discuss further the requirement, in Theorem \ref{2202a}, that $\rho_{\mathfrak{A}x}\left( 0,-\mathfrak{A}_{1}x\right) $ exist, where $\mathfrak{A}_{1}$ is weak-operator totally bounded. We begin by giving conditions under which that requirement is satisfied.

If $\mathfrak{A}x$ has positive, finite dimension---in which case it is both closed and located in $H$---then $\mathfrak{A}x-\mathfrak{A}_{1}x$ is inhabited, so Proposition (1.5) of \cite{LNM873} can be applied to show that $\mathfrak{A}x-\mathfrak{A}_{1}x$ is located in $\mathfrak{A}x$. In particular, $\rho _{\mathfrak{A}x}\left( 0,-\mathfrak{A}_{1}x\right)$ exists. On the other hand, if $P$ is a projection in $\mathcal{B}(H)$ and
\[\mathfrak{A}\equiv \left\{ PTP:T\in \mathcal{B}(H)\right\},\]
then $\mathfrak{A}$ can be identified with $\mathcal{B}(P(H))$, so $\mathfrak{A}_{1}$ is weak-operator totally bounded. Moreover, if $x\neq 0$, then $\mathfrak{A}x=P(H)$ and so is both closed and located, $\mathfrak{A}_{1}x=\overline{B}(0,\left\Vert Px\right\Vert )\cap P(H)$, and $\rho _{\mathfrak{A}x}(0,-\mathfrak{A}_{1}x)=\left\Vert Px\right\Vert$.

We end with a Brouwerian example showing that we cannot drop the existence of $\rho_{\mathfrak{A}x}\left( 0,-\mathfrak{A}_{1}x\right)$ from the hypotheses of Theorem \ref{2202a}. Consider the case where $H=\mathbf{R}\times \mathbf{R}$, and let $\mathfrak{A}$ be the linear subspace (actually an algebra) of $\mathcal{B}(H)$ comprising all matrices of the form
\[T_{a,b}\equiv \left(\begin{array}{cc}a & 0 \\ 0 & b\end{array}\right)\]
with $a,b\in \mathbf{R}$. It is easy to show that $\mathfrak{A}$ is uniformly closed: if $\left( a_{n}\right) ,\left( b_{n}\right) $ are sequences in $\mathbf{R}$ such that $\left( T_{a_{n},b_{n}}\right)_{n\geqslant 1}$ converges uniformly to an element $T\equiv \left(\begin{array}{cc}a_{\infty } & p \\ q & b_{\infty }\end{array}\right)$, then
\[a_{n}=T_{a_{n},b_{n}}\left(\begin{array}{c}1 \\ 0\end{array}\right) \rightarrow T\left(\begin{array}{c}1 \\ 0\end{array}\right) =a_{\infty },\]
Likewise, $b_{n}\rightarrow b_{\infty }$, $p=0$, and $q=0$. Hence $T=T_{a_{\infty },b_{\infty }}\in \mathfrak{A}$.

Now, if $\left( x,y\right) $ is in the unit ball of $H$, then
\begin{eqnarray*}
\left\Vert T_{a,b}\left(\begin{array}{c}x \\ y\end{array}\right) \right\Vert ^{2} &=&\left\Vert \left(\begin{array}{c}ax \\ by\end{array}\right) \right\Vert ^{2}=a^{2}x^{2}+b^{2}y^{2} \\
&=&a^{2}\left( x^{2}+y^{2}\right) +\left( b^{2}-a^{2}\right) y^{2} \\
&=&a^{2}+\left( b^{2}-a^{2}\right) y^{2}\text{.}
\end{eqnarray*}
We see from this that if $a^{2}\geqslant b^{2}$, then $\left\Vert T_{a,b}\right\Vert ^{2}\leqslant a^{2}$; moreover, $T_{a,b}\left( 1,0\right)=a$, so $\left\Vert T_{a,b}\right\Vert ^{2}=a^{2}$. If $a^{2}<b^{2}$, then a similar argument shows that $\left\Vert T_{a,b}\right\Vert ^{2}=b^{2}$. It now follows that $\left\Vert T_{a,b}\right\Vert $ exists and equals $\max\left\{ \left\vert a\right\vert ,\left\vert b\right\vert \right\} $. Also, since, relative to the uniform topology on $\mathcal{B}(H)$, $\mathfrak{A}_{1}$ is homeomorphic to the totally bounded subset
\[\left\{ \left( a,b\right) :\max \left\{ \left\vert a\right\vert ,\left\vert b\right\vert \right\} \leqslant 1\right\}\]
of $\mathbf{R}^{2}$, it is uniformly, and hence weak-operator, totally bounded.

Consider the vector $\xi \equiv \left( 1,c\right) $, where $c\in \mathbf{R}$. If $c=0$, then $\mathfrak{A}\xi =\mathbf{R}\times \left\{ 0\right\} $, the projection of $H$ on $\mathfrak{A}\xi $ is just the projection on the $x$-axis, and $\rho \left( \left( 0,1\right) ,\mathfrak{A}\xi \right) =1$. If $c\neq 0$, then
\[\mathfrak{A\xi }=\left\{ \left( a,cb\right) :a,b\in \mathbf{R}\right\} =\mathbf{R}\times \mathbf{R},\]
the projection of $H$ on $\mathfrak{A\xi }$ is just the identity projection $I$, and $\rho \left( \left( 0,1\right) ,\mathfrak{A\xi }\right) =0$. Suppose, then, that the projection $P$ of $H$ on $\mathfrak{A\xi }$ exists. Then either $\rho \left( \left( 0,1\right) ,\mathfrak{A\xi }\right) >0$ or $\rho \left( \left( 0,1\right) ,\mathfrak{A\xi }\right) <1$. In the first case, $c=0$; in the second, $c\neq 0$. Thus if $\left[ \mathfrak{A}x\right]$ exists for each $x\in H$, then we can prove that
\[\forall _{x\in \mathbf{R}}\left( x=0\vee x\neq 0\right),\]
a statement constructively equivalent to the essentially nonconstructive omniscience principle \textbf{LPO}:
\begin{quote}
For each binary sequence $\left( a_{n}\right) _{n\geqslant 1}$, either $a_{n}=0$ for all $n$ or else there exists $n$ such that $a_{n}=1$.
\end{quote}
It follows from this and our Theorem \ref{2202a} that if $\rho _{\mathfrak{A}x}\left( 0,-\mathfrak{A}_{1}x\right)$ exists for each $x\in H$, then we can derive \textbf{LPO}.

\section*{Acknowledgement}
  This research was partially done when the author was a visiting fellow at the Isaac Newton Institute for the Mathematical Sciences, in the programme \emph{Semantics \& Syntax: A Legacy of Alan Turing}. The author thanks the referees for helpful comments that improved the presentation of the paper.


\end{document}